\newcommand\version{June 8, 2009}
\newtheorem{theorem}{Theorem}[section]
\newtheorem{proposition}[theorem]{Proposition}
\newtheorem{lemma}[theorem]{Lemma}
\theoremstyle{definition}
\theoremstyle{remark}
\newtheorem{remark}[theorem]{Remark}
\numberwithin{equation}{section}
\newcommand{\C}{\mathbb{C}}
\renewcommand{\epsilon}{\varepsilon}
\renewcommand{\H}{\mathbb{H}}
\newcommand{\loc}{{\rm loc}}
\newcommand{\N}{\mathbb{N}}
\renewcommand{\phi}{\varphi}
\newcommand{\R}{\mathbb{R}}
\newcommand{\Sph}{\mathbb{S}}
\DeclareMathOperator{\re}{Re}
\DeclareMathOperator{\Span}{span}
\begin{document}

\title[Eigenvalues on the Heisenberg group --- \version]{Inequalities between Dirichlet and Neumann eigenvalues on the Heisenberg group}

\author{Rupert L. Frank}
\address{Rupert L. Frank, Department of Mathematics,
Princeton University, Washington Road, Princeton, NJ 08544, USA}
\email{rlfrank@math.princeton.edu}

\author[A. Laptev]{Ari Laptev}
\address{Ari Laptev, Department of Mathematics, Imperial College London, London SW7 2AZ, UK 
$\&$ Department of Mathematics, Royal Institute of Technology, 100 44 Stockholm, Sweden}
\email{a.laptev@imperial.ac.uk $\&$ laptev@math.kth.se}

\thanks{\copyright\, 2009 by the authors. This paper may be reproduced, in its entirety, for non-commercial purposes.}

\begin{abstract}
We prove that for any domain in the Heisenberg group the $(k+1)$'th Neumann eigenvalue of the sub-Laplacian is strictly less than the $k$'th Dirichlet eigenvalue. As a byproduct we obtain similar inequalities for the Euclidean Laplacian with a homogeneous magnetic field.
\end{abstract}

\maketitle

\section{Introduction and main result}

Universal eigenvalue inequalities are a classical topic in the spectral theory of differential operators. Most relevant to our work here are comparison theorems between the Dirichlet and Neumann eigenvalues $\lambda_j(-\Delta_\Omega^D)$ and $\lambda_j(-\Delta_\Omega^N)$, $j\in\N$, of the Laplacian in a smooth, bounded domain $\Omega\subset\R^d$. Note that $\lambda_j(-\Delta_\Omega^N)\leq \lambda_j(-\Delta_\Omega^D)$ for all $j\in\N$ by the variational characterization of eigenvalues. This trivial bound for $j=1$ was strengthened by P\'olya \cite{Pol} who observed that $\lambda_2(-\Delta_\Omega^N)<\lambda_1(-\Delta_\Omega^D)$ for $d=2$. Payne \cite{Pay}, Aviles \cite{Avi} and Levine and Weinberger \cite{LevWei} obtained further results in this direction under suitable convexity assumptions on $\Omega$. A breakthrough was made by Friedlander \cite{Fri} who proved that 
\begin{equation}\label{eq:fried}
\lambda_{j+1}(-\Delta_\Omega^N)\leq \lambda_j(-\Delta_\Omega^D) 
\qquad\text{for all}\ j\in\N \,,
\end{equation}
\emph{without} any curvature assumption on $\partial\Omega$. Later, Filonov \cite{Fi} simplified Friedlander's proof, removed the smoothness assumption on $\partial\Omega$ and showed that \eqref{eq:fried} is strict for $d\geq 2$. While it is still open whether the Payne--Levine--Weinberger bound $\lambda_{j+d}(-\Delta_\Omega^N)\leq \lambda_j(-\Delta_\Omega^D)$ holds for non-convex domains in $\R^d$, the attention has recently shifted to non-Euclidean analogues of \eqref{eq:fried}. Mazzeo \cite{Maz} has shown for instance that \eqref{eq:fried} holds for domains in hyperbolic space but may fail for domains on the sphere; see also \cite{AshLev} and \cite{HsuWan}.

Our goal in this paper is to obtain the analogue of \eqref{eq:fried} on the Heisenberg group. In this setting \eqref{eq:fried} was previously known only under rather restrictive and non-generic geometric assumptions on $\Omega$. Here we will manage to remove these conditions and, as a bonus, obtain similar inequalities for the Euclidean Laplacian with a homogeneous magnetic field.

The Heisenberg group $\H$ is the prime example of non-commutative harmonic analysis and we refer to \cite{Ste} for background material. We consider $\H$ as $\R^3$ with coordinates $(x,y,t)$ and the (non-commutative) multiplication $(x,y,t)\circ (x',y',t')= (x+x',y+y',t+t'-2(xy'-yx'))$. The vector fields
$$
X= \frac{\partial}{\partial x} + 2y\frac{\partial}{\partial t} \,,
\qquad
Y= \frac{\partial}{\partial y} - 2x\frac{\partial}{\partial t}
$$ 
are left-invariant and the sub-Laplacian on $\H$ is given by
$$
-X^2-Y^2 = -\left(\frac{\partial}{\partial x} + 2y\frac{\partial}{\partial t}\right)^2- \left( \frac{\partial}{\partial y} - 2x\frac{\partial}{\partial t}\right)^2 \,.
$$
We are interested in the Dirichlet and Neumann realizations of this sub-Laplacian on domains $\Omega\subset\H$. The space $L_2(\Omega)$ is defined with respect to the restriction to $\Omega$ of the Lebesgue measure (which coincides with the Haar measure on $\H$) and hence coincides with its Euclidean counterpart. If $\Omega$ is understood, we denote the norm of $u\in L_2(\Omega)$ simply by $\|u\|$. The Sobolev spaces on the Heisenberg group (in this context also known as Folland--Stein spaces) are defined as follows. We denote by $S^1(\Omega)$ the space of all $u\in L_2(\Omega)$ for which the distributional derivatives $Xu$ and $Yu$ belong to $L_2(\Omega)$, equipped with the norm $(\|Xu\|^2+\|Yu\|^2+\|u\|^2)^{1/2}$. The space \textit{\r{S}}$^1(\Omega)$ is defined as the closure of $C_0^\infty(\Omega)$ in $S^1(\Omega)$. The Dirichlet and the Neumann sub-Laplacians $L_\Omega^D$ and $L_\Omega^N$ on $\Omega$ are defined as the self-adjoint operators in $L_2(\Omega)$ corresponding to the quadratic form
$$
\| X u \|^2 + \|Y u \|^2 = \int_\Omega \left( \left|X u\right|^2 + \left|Y u\right|^2 \right) \,dx\,dy\,dt
$$
with form domains \textit{\r{S}}$^1(\Omega)$ and $S^1(\Omega)$, respectively. For any lower semi-bounded operator $A$ with purely discrete spectrum (which is equivalent to its form domain being compactly embedded into the underlying Hilbert space) we denote by $\lambda_j(A)$, $j\in\N$, the $j$-th eigenvalue of $A$, counting multiplicities. The variational principle implies immediately the inequality $\lambda_j(L_\Omega^N)\leq \lambda_j(L_\Omega^D)$ for all $j$. Our main result is the analogue of Friedlander's inequality \eqref{eq:fried} on $\H$. We shall prove

\begin{theorem}
 \label{main}
Let $\Omega\subset\H$ be a domain of finite measure such that the embedding $S^1(\Omega)\subset L_2(\Omega)$ is compact. Then $\lambda_{j+1}(L_\Omega^N)<\lambda_j(L_\Omega^D)$ for any $j\in\N$.
\end{theorem}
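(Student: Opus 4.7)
The plan is to extend Filonov's proof \cite{Fi} of Friedlander's inequality to the Heisenberg setting. By the min--max principle, it suffices to exhibit a $(j+1)$-dimensional subspace $V\subset S^1(\Omega)$ on which the Neumann form $\mathcal{E}(u)=\|Xu\|^2+\|Yu\|^2$ satisfies $\mathcal{E}(u)\leq\lambda\,\|u\|^2$, with strict inequality for some nonzero $u$, where $\lambda:=\lambda_j(L_\Omega^D)$. I would take $V=\Span(u_1,\dots,u_j,\phi)$, with $u_1,\dots,u_j$ the first $j$ $L_2$-orthonormal Dirichlet eigenfunctions (extended by zero so that they belong to \textit{\r{S}}$^1(\Omega)\subset S^1(\Omega)$) and $\phi$ an auxiliary function defined on all of $\H$ satisfying $-X^2\phi-Y^2\phi=\lambda\phi$ globally but not the Dirichlet boundary condition on $\partial\Omega$.

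To construct $\phi$, I would use the partial Fourier transform in the $t$-variable: writing $\phi=e^{i\tau t}\psi(x,y)$ conjugates $-X^2-Y^2$ to the Landau-type magnetic Laplacian $H_\tau=(-i\partial_x+2\tau y)^2+(-i\partial_y-2\tau x)^2$ on $\R^2$, whose spectrum is the Landau ladder $\{(2n+1)\cdot 4|\tau|:n\geq 0\}$ with each level infinitely degenerate. Setting $\tau=-\lambda/4$ and taking the lowest Landau level, the left-translates $\phi_w:=L_{(a,b,0)}\phi_0$, $w=a+ib\in\C$, of the ground state $\phi_0=e^{i\tau t}e^{\tau|z|^2}$ (with $z=x+iy$) form a two-real-parameter family of smooth global eigenfunctions with eigenvalue $\lambda$. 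Each $\phi_w$ satisfies $(X-iY)\phi_w=0$, from which short calculations yield $|\phi_w|^2=e^{-\lambda|z-w|^2/2}$ (up to a phase) and the pointwise identity $|X\phi_w|^2+|Y\phi_w|^2=\tfrac12|(X+iY)\phi_w|^2=\tfrac{\lambda^2}{2}|z-w|^2\,|\phi_w|^2$.

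A standard Filonov-type computation, using that $u_i\in$\textit{\r{S}}$^1(\Omega)$ to integrate by parts with vanishing boundary contributions, shows that $\mathcal{E}(u_i,\phi_w)=\lambda_i\langle u_i,\phi_w\rangle=\lambda\langle u_i,\phi_w\rangle$, so that $\langle u_i,\phi_w\rangle=0$ whenever $\lambda_i\neq\lambda$ and all cross terms cancel. The Rayleigh-quotient estimate for $u=\sum c_i u_i+c\phi_w$ collapses to
\begin{equation*}
\mathcal{E}(u)-\lambda\|u\|^2 = \sum_{i=1}^j(\lambda_i-\lambda)|c_i|^2+|c|^2\!\int_\Omega\!\Big(\tfrac{\lambda^2}{2}|z-w|^2-\lambda\Big)e^{-\lambda|z-w|^2/2}\,dx\,dy\,dt.
\end{equation*}
The first sum is $\leq 0$. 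The main obstacle is that the second integrand has indefinite sign; in contrast to the Euclidean plane wave $e^{i\xi\cdot x}$, for which $|\nabla e^{i\xi\cdot x}|^2=\lambda|e^{i\xi\cdot x}|^2$ pointwise, no such identity is available here, so one cannot bound $\mathcal{E}(\phi_w)-\lambda\|\phi_w\|^2$ from above for a single $w$ without further input.

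The key idea for overcoming this is to exploit the two-parameter freedom in $w\in\C$. Swapping the order of integration and evaluating the inner Gaussian integral explicitly, one obtains the orthogonality identity
\begin{equation*}
\int_\C\Big[\mathcal{E}(\phi_w)-\lambda\|\phi_w\|^2\Big]\,dw=0
\end{equation*}
for every $\Omega$ of finite measure. Since this function of $w$ is not identically zero (as follows from $|\Omega|>0$ together with a Fourier-uniqueness argument applied to the convolution with the $t$-projection of $\mathbf{1}_\Omega$), it must be strictly negative on a set of positive Lebesgue measure in $\C$. Choosing $w$ in this set, additionally away from the real-analytic subvariety where $\phi_w\in\Span(u_1,\dots,u_j)$ in $L_2(\Omega)$, produces the required test function and yields $\lambda_{j+1}(L_\Omega^N)\leq\lambda$. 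The hardest step I anticipate is upgrading this to strict inequality: since $\mathcal{E}(u)-\lambda\|u\|^2$ vanishes identically on the $\lambda$-eigenspace of $L_\Omega^D$ contained in $V$, one must invoke an additional device — replacing the last $m$ Dirichlet eigenfunctions in the basis (where $m$ is the multiplicity of $\lambda$ as a Dirichlet eigenvalue) by $m+1$ further coherent states $\phi_{w_1},\dots,\phi_{w_{m+1}}$ and verifying that the resulting $(m+1)\times(m+1)$ Gram-like matrix of $\mathcal{E}-\lambda\langle\cdot,\cdot\rangle$ is negative definite for a generic choice of the $w_k$.
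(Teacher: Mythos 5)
Your proposal follows essentially the same route as the paper: the passage to the fiber $e^{i\tau t}$ reducing the sub-Laplacian to a two-dimensional Landau operator, a two-real-parameter family of global eigenfunctions (your coherent states $\phi_w$ are, up to normalization and phase, exactly the paper's lowest-Landau-level projection kernel $P_1^B(\cdot,z')$ with $B=4|\tau|$), the Filonov-type trial space $\Span\{u_1,\dots,u_j,\phi_w\}$ with cross terms cancelling by integration by parts against the Dirichlet eigenfunctions, and --- most importantly --- the averaging identity $\int_{\C}\bigl[\mathcal{E}(\phi_w)-\lambda\|\phi_w\|^2\bigr]dw=0$, which is precisely the paper's key Lemma \ref{mainlemma} (stated there for an arbitrary Landau level $k$ and proved by differentiating the constancy of $P_k^B$ on the diagonal; for $k=1$ it reduces to your explicit Gaussian computation). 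Your observation that the averaged quantity, a convolution of the $t$-marginal of $\mathbf 1_\Omega$ with a fixed kernel whose Fourier transform vanishes only at the origin, is not identically zero and hence strictly negative on a nonempty open set of $w$, is correct and in fact slightly sharper than what the paper extracts (it only uses that the set where the non-strict energy inequality holds has positive measure). Your requirement that $w$ avoid the locus where $\phi_w\in\Span(u_1,\dots,u_j)$ in $L_2(\Omega)$ corresponds to the paper's linear-independence step, which is proved there by restricting to a line and using real-analyticity; your "real-analytic subvariety" assertion would need a comparable justification.

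The one genuine gap is the endgame, i.e., upgrading $\lambda_{j+1}(L_\Omega^N)\le\lambda_j(L_\Omega^D)$ to a strict inequality. You correctly identify that strict negativity of $\mathcal{E}(\phi_w)-\lambda\|\phi_w\|^2$ alone does not suffice, since $\mathcal{E}-\lambda\|\cdot\|^2$ still vanishes on the Dirichlet $\lambda$-eigenspace inside your trial space; but the device you then sketch --- replacing the top $m$ Dirichlet eigenfunctions by $m+1$ additional coherent states and asserting that the resulting $(m+1)\times(m+1)$ matrix of $\mathcal{E}-\lambda\langle\cdot,\cdot\rangle$ is negative definite for generic centers --- is left entirely unverified, and it is not clear how the genericity claim would be established (negative definiteness is a much stronger demand than the averaged inequality you actually control). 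The paper closes this gap with a lighter selection argument: because the admissible centers form a set of positive measure and the corresponding restrictions to $\Omega$ span an infinite-dimensional space, one may choose the extra trial function to be linearly independent of the \emph{finite-dimensional} span $\mathcal N$ of all Neumann eigenfunctions with eigenvalue at most $\lambda_{j+1}(L_\Omega^N)$; equality in the variational principle would force the $(j+1)$-dimensional trial space into $\mathcal N$, which this choice excludes. I would recommend replacing your Gram-matrix device by that argument.
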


\begin{remark}
The assumption that the embedding $S^1(\Omega)\subset L_2(\Omega)$ is compact can be relaxed. Indeed, our proof shows that if $\Omega\subset\H$ is a domain of finite measure (which implies that $L_\Omega^D$ has discrete spectrum) then the total spectral multiplicity of the operator $L_\Omega^N$ in the interval $[0,\lambda_j(L_\Omega^D))$ is at least $j+1$.
\end{remark}

Theorem \ref{main} holds also on the higher-dimensional Heisenberg groups $\H^{n}$; see Section~\ref{sec:ext}.

We close this introduction by commenting on the similarities and differences between the proofs of \eqref{eq:fried} in the Heisenberg and in the Euclidean case. As emphasized by Mazzeo \cite{Maz}, Friedlander's proof of the Euclidean inequality \eqref{eq:fried} relies on the existence, for any $\lambda>0$, of a function $U$ such that
\begin{equation}\label{eq:expon}
-\Delta U = \lambda U
\quad\text{and}\quad
|\nabla U| \leq \sqrt\lambda |U| \,.
\end{equation}
Of course, on Euclidean space such functions are provided by $U(x)=e^{i\sqrt\lambda x\cdot\omega}$, $\omega\in\Sph^{d-1}$. Actually, an inspection of the proofs in \cite{Fri,Fi} shows that the second, pointwise property in \eqref{eq:expon} can be relaxed to the averaged property
$$
\int_\Omega |\nabla U|^2\,dx \leq \lambda\int_\Omega |U|^2 \,dx \,.
$$
Similarly, we will prove Theorem \ref{main} by constructing functions $U$ such that
\begin{equation}\label{eq:exponh}
-(X^2+Y^2)U = \lambda U
\quad\text{and}\quad
\|X U \|^2_{L_2(\Omega)} + \|Y U \|^2_{L_2(\Omega)} \leq \lambda \|U\|^2_{L_2(\Omega)} \,.
\end{equation}
This construction is described in Subsection \ref{sec:extrial} and constitutes the main novelty of this paper. While it is easy to find explicit solutions $U_{z'}$ of the equation in \eqref{eq:exponh}, depending on a parameter $z'\in\R^2$, it seems rather difficult to prove that for given $z'$ and $\Omega$ the inequality in \eqref{eq:exponh} is satisfied. Our way around this impasse is to show that the energy inequality holds after \emph{averaging} over $z'\in\R^2$. We believe that this averaging technique might have further applications beyond the present context.

For the sake of clarity we carry out the averaging procedure first for the two-dimensional Landau operator.
We emphasize that the connection between this operator and the sub-Laplacian on the Heisenberg group was also essential in the recent proof of sharp Berezin--Li--Yau inequalities on $\H$ \cite{HanLap}; see also \cite{Str}. Eigenvalue inequalities for the Landau operator which we obtain along our way to Theorem \ref{main} are presented in the final Section \ref{sec:ext}.

\subsection*{Acknowledgements}
The authors acknowledge interesting discussions with A. Hansson concerning the topics of this paper. The first author wishes to thank E. Lieb and R. Seiringer for helpful remarks. Support through DFG grant FR 2664/1-1 and U.S. NSF grant PHY 06 52854  (R.F.) is gratefully acknowledged.

%%%%%%%%%%%%%%%%%%%%%%%%%%%%%%%%%%%%%%%%%%%%%%%%%%%%

\section{Proof of Theorem \ref{main}}\label{sec:proof}

\subsection{Eigenfunctions of the two-dimensional Landau operator}\label{sec:extrial}

For $z=(x,y)\in\R^2$ let $\mathbf A(x,y):=\frac12 (-y,x)^T$ and $\mathbf D=-i\nabla$. For $B>0$ the spectrum of the self-adjoint operator $(\mathbf D- B\mathbf A)^2$ in $L_2(\R^2)$ consists of the points $B(2k-1)$, $k\in\N$, each being an eigenvalue of infinite multiplicity. Hence there exist infinitely many linearly independent functions $U$ on $\R^2$ satisfying $(\mathbf D- B\mathbf A)^2 U = B(2k-1) U$ and $\int_{\R^2} |(\mathbf D- B\mathbf A) U|^2 \,dz = B(2k-1) \int_{\R^2} |U|^2 \,dz$. It is a non-trivial question, however, whether for a given domain $\Omega$ one can find $U$'s such that $\int_\Omega | (\mathbf D- B\mathbf A) U|^2 \,dz \leq B(2k-1) \int_\Omega |U|^2 \,dz$. That the answer is affirmative is the content of

\begin{proposition}\label{extrial}
 Let $B>0$, $k\in\N$ and $\Omega\subset\R^2$ a domain of finite measure. There are infinitely many linearly independent functions $U \in C^\infty(\Omega)\cap L_2(\Omega)$ satisfying
\begin{align*}
(\mathbf D- B\mathbf A)^2 U & = B(2k-1) U \qquad \text{in} \ \Omega\,, \\
\int_\Omega | (\mathbf D- B\mathbf A) U|^2 \,dz & \leq B(2k-1) \int_\Omega |U|^2 \,dz \,.
\end{align*}
\end{proposition}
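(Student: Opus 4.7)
My plan is to realize the $U$'s as magnetic translates of a single Landau eigenfunction, and to establish the energy inequality by \emph{averaging} over the translation parameter.

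Fix any nonzero Schwartz-class $\phi\in L_2(\R^2)$ with $(\mathbf D-B\mathbf A)^2\phi=B(2k-1)\phi$ on all of $\R^2$ (explicit coherent states in the symmetric gauge provide such a $\phi$). Since $\mathbf A$ is linear, the operator $T_{z'}f(z) := e^{iB\mathbf A(z')\cdot z}\,f(z-z')$ is unitary on $L_2(\R^2)$, and a short calculation using $\mathbf A(z)-\mathbf A(z')=\mathbf A(z-z')$ shows that $T_{z'}$ commutes with each component of $\mathbf D-B\mathbf A$. Set $U_{z'}:=T_{z'}\phi\in C^\infty(\R^2)\cap L_2(\R^2)$. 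Then $U_{z'}$ satisfies $(\mathbf D-B\mathbf A)^2U_{z'}=B(2k-1)U_{z'}$, and, crucially, the \emph{moduli}
$$|U_{z'}(z)|^2 = |\phi(z-z')|^2, \qquad |(\mathbf D-B\mathbf A)U_{z'}(z)|^2 = |(\mathbf D-B\mathbf A)\phi(z-z')|^2$$
are pure translates. Defining the ``energy excess''
$$g(z') := \int_\Omega \bigl(|(\mathbf D-B\mathbf A)U_{z'}|^2 - B(2k-1)|U_{z'}|^2\bigr)\,dz,$$
substituting the pointwise identities, and applying Fubini with the substitution $w=z-z'$, one finds
$$\int_{\R^2} g(z')\,dz' = |\Omega|\cdot\bigl(\|(\mathbf D-B\mathbf A)\phi\|_{L_2(\R^2)}^2 - B(2k-1)\|\phi\|_{L_2(\R^2)}^2\bigr) = 0,$$
the last equality being the global eigenvalue equation for $\phi$. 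Since $g$ is continuous, it is either identically zero or takes strictly negative values on some non-empty open set; in either case $E:=\{z'\in\R^2:g(z')\le 0\}$ has positive Lebesgue measure, and every $U_{z'}$ with $z'\in E$ fulfills the desired inequality.

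It remains to select infinitely many linearly independent $U_{z'}$ from $\{U_{z'}:z'\in E\}$. The map $z'\mapsto U_{z'}$ is real-analytic as an $L_2(\R^2)$-valued map and $\{U_{z'}\}_{z'\in\R^2}$ spans the entire, infinite-dimensional, $k$-th Landau subspace --- a classical consequence of the irreducibility of the magnetic translations on a single Landau level. Inductively, if $U_{z_1'},\dots,U_{z_{N-1}'}$ have been chosen linearly independent from $E$, the Gram determinant $\det[\langle U_{z_i'},U_{z_j'}\rangle]_{1\le i,j\le N}$, viewed as a function of $z_N'=:z'$, is real-analytic and cannot vanish identically (else all $U_{z'}$ would lie in an $(N-1)$-dimensional subspace); its zero set is therefore a real-analytic variety of measure zero and hence meets $E$ only in a null set, so a suitable $z_N'\in E$ exists. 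The main obstacle is the averaging identity $\int_{\R^2}g=0$, which rests squarely on the magnetic-translation invariance of the moduli $|U_{z'}|$ and $|(\mathbf D-B\mathbf A)U_{z'}|$ --- precisely the feature one hopes to transplant to the Heisenberg setting via the link between the sub-Laplacian and the Landau operator.
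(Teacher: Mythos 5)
Your construction is, up to normalization, the same family of trial functions as the paper's: the paper takes $U=P_k^B(\cdot,z')$, the integral kernel of the Landau level projection, and since $\mathbf A(z')\cdot z=-\tfrac12\, z\times z'$ one has exactly $P_k^B(\cdot,z')=T_{z'}\phi$ with $\phi(z)=\tfrac{B}{2\pi}e^{-B|z|^2/4}L_{k-1}(B|z|^2/2)$, i.e.\ your magnetic translates of a coherent state. Where you genuinely differ is in how the averaged energy identity is obtained. The paper proves a pointwise-in-$z$ lemma, $\int_{\R^2}|(\mathbf D_z-B\mathbf A(z))P_k^B(z,z')|^2\,dz'=B(2k-1)\int_{\R^2}|P_k^B(z,z')|^2\,dz'$, via a computation using $P^2=P$ and the constancy of the kernel on the diagonal; you instead observe that $T_{z'}$ commutes with $\mathbf D-B\mathbf A$, so that $|U_{z'}|^2$ and $|(\mathbf D-B\mathbf A)U_{z'}|^2$ are pure translates in $z'$, whence the $z'$-average of the energy density is the constant $\|(\mathbf D-B\mathbf A)\phi\|^2_{L_2(\R^2)}=B(2k-1)\|\phi\|^2_{L_2(\R^2)}$ by the global eigenvalue equation. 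That is a cleaner derivation of the same averaging fact, and the Fubini step and the positive-measure conclusion for $E$ are correct.

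The step you should tighten is linear independence. As written, your Gram-determinant argument yields functions linearly independent in $L_2(\R^2)$, whereas the proposition needs them linearly independent as functions on $\Omega$; a priori a nontrivial combination could vanish on $\Omega$ without vanishing globally. This is easily repaired --- the $U_{z'}$ are real-analytic (Gaussians times polynomials times phases), so a combination vanishing on the connected open set $\Omega$ vanishes identically; equivalently, run the Gram argument with $L_2(\Omega)$ inner products, whose entries are still real-analytic in $z'$ --- but it must be said. (The paper sidesteps this by proving independence of the restrictions directly on a horizontal segment inside $\Omega$ from the explicit Laguerre formula.) Relatedly, the ``irreducibility of magnetic translations on a single Landau level'' that you invoke to get an infinite-dimensional span is true but nontrivial; with the explicit $\phi$ it can be replaced by the paper's elementary growth argument as $x\to\infty$.
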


In order to prove this proposition we use some properties of the spectral projection $P_k^B$ corresponding to the eigenvalue $B(2k-1)$, $k\in\N$, of the operator $(\mathbf D- B\mathbf A)^2$ in $L_2(\R^2)$. This projection is an integral operator with integral kernel
\begin{equation}
\label{eq:explicit}
P_k^B(z,z') = \frac{B}{2\pi} e^{-i B z\times z'/2 - B|z-z'|^2/4} L_{k-1}(B|z-z'|^2/2) \,.
\end{equation}
Here $L_{k-1}$ denotes the Laguerre polynomial of degree $k-1$, normalized by $L_{k-1}(0)=1$. We will choose the $U$'s in Theorem \ref{extrial} as $P_k^B(\cdot,z')$ for different values of $z'$. Indeed, since $P_k^B$ is a projector corresponding to $B(2k-1)$, one has 
\begin{equation}\label{eq:eq}
(\mathbf D_z- B\mathbf A(z))^2 P^B_k(z,z')=B(2k-1)P^B_k(z,z')
\end{equation}
for any $z'$. In order to find $z'$'s for which the claimed energy bound holds we use the following averaging lemma. It appeared in \cite{Fr} in a different context and we include here a proof for the sake of completeness.

\begin{lemma}\label{mainlemma}
 Let $B>0$ and $k\in\N$. Then for all $z\in\R^2$
\begin{equation}\label{eq:homneuproof}
\int_{\R^2} |(\mathbf D_z-B\mathbf A(z)) P^B_k(z,z')|^2 \,dz' = B(2k-1) \int_{\R^2} |P^B_k(z,z')|^2 \,dz' \,.
\end{equation}
\end{lemma}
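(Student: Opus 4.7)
The plan is to interpret both sides of \eqref{eq:homneuproof} as diagonal values of integral kernels and then to evaluate these using the creation/annihilation structure of the Landau Hamiltonian $H := (\mathbf D - B\mathbf A)^2$. The main work is algebraic bookkeeping with ladder operators; there is no analytic subtlety.

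\emph{Step 1 (magnetic-translation invariance).} The magnetic translations $T_a f(z) := e^{-iBz\times a/2}f(z-a)$, $a\in\R^2$, are unitaries on $L_2(\R^2)$ that commute with every component $\pi_j := D_j - BA_j$ of $\mathbf D - B\mathbf A$, and therefore also with the projection $P^B_k$. For any operator $A$ commuting with every $T_a$, the kernel satisfies $|A(z,z')| = |A(z-a, z'-a)|$, so $|A(z,z')|$ depends only on $z - z'$. Applied to $A = P^B_k$ and to $A = \pi_j P^B_k$ (which inherits the commutation), this shows that both sides of \eqref{eq:homneuproof} are independent of $z$, and it suffices to establish the identity at $z = 0$.

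\emph{Step 2 (diagonal reformulation).} Self-adjointness of $\pi_j$ and $P^B_k$, together with $(P^B_k)^2 = P^B_k$ and the general identity $(KK^*)(z,z) = \int|K(z,z')|^2\,dz'$, gives
\begin{align*}
\int_{\R^2}|P^B_k(z,z')|^2\,dz' &= P^B_k(z,z) = \frac{B}{2\pi}, \\
\sum_{j=1,2}\int_{\R^2}\bigl|(\pi_j)_z P^B_k(z,z')\bigr|^2\,dz' &= \sum_{j=1,2}\bigl(\pi_j P^B_k \pi_j\bigr)(z,z).
\end{align*}
Thus the identity reduces to showing $\bigl(\sum_{j=1,2}\pi_j P^B_k\pi_j\bigr)(0,0) = B(2k-1)\cdot B/(2\pi)$.

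\emph{Step 3 (ladder-operator computation).} A short commutator calculation yields $[\pi_1,\pi_2] = iB$, so the operators $a := (2B)^{-1/2}(\pi_1+i\pi_2)$ and $a^* := (2B)^{-1/2}(\pi_1-i\pi_2)$ satisfy $[a,a^*] = 1$ and $H = B(2a^*a + 1)$. Expanding the two products and using $aa^* - a^*a = 1$ gives the algebraic identity
$$\sum_{j=1,2}\pi_j P^B_k\pi_j = B\bigl(aP^B_k a^* + a^*P^B_k a\bigr).$$
Since $a^*a$ acts on $\ran P^B_k$ as the scalar $k-1$, the standard ladder relations $P^B_k a^* = a^* P^B_{k-1}$ and $P^B_k a = a P^B_{k+1}$ (with the convention $P^B_0 := 0$, so the $k=1$ case is absorbed by the factor below) produce
$$aP^B_k a^* = (k-1)\,P^B_{k-1}, \qquad a^*P^B_k a = k\,P^B_{k+1}.$$
From \eqref{eq:explicit} and $L_{j-1}(0) = 1$ one has $P^B_j(z,z) = B/(2\pi)$ for every $j\geq 1$, so the required diagonal equals $B[(k-1) + k]\cdot B/(2\pi) = B(2k-1)\cdot B/(2\pi)$, which is exactly the right-hand side. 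The only things to check carefully are the sign $[\pi_1,\pi_2] = +iB$ and the correct assignment of raising/lowering roles to $a^*$ and $a$.
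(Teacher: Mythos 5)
Your proof is correct, but the core computation goes by a genuinely different route than the paper's. Both arguments begin identically: using $P^2=P$, self-adjointness, and $P^B_k(z,z)=B/2\pi$, the identity is reduced to the statement that the diagonal value of a certain operator built from $\pi_j$ and $P^B_k$ equals $B(2k-1)\cdot B/2\pi$. From there the paper works directly with the differential expression $\mathbf Q_z^2+\overline{\mathbf Q_{z'}}^2-2\mathbf Q_z\overline{\mathbf Q_{z'}}$ applied to the kernel, sorts it by order of derivative, and kills each piece on the diagonal using only that $P^B_k$ is smooth and constant on the diagonal plus the eigenvalue equation; the authors explicitly advertise that this uses neither the explicit kernel \eqref{eq:explicit} nor any Landau-level structure beyond the single projection $P^B_k$. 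You instead pass to ladder operators, establish the operator identity $\sum_j\pi_jP^B_k\pi_j=B\bigl(aP^B_ka^*+a^*P^B_ka\bigr)=B\bigl((k-1)P^B_{k-1}+kP^B_{k+1}\bigr)$, and read off the diagonal. Your route is algebraically cleaner and yields strictly more (an exact identification of $\sum_j\pi_jP^B_k\pi_j$ as a combination of neighboring Landau projections, not just its diagonal value), at the cost of invoking the intertwining relations $P^B_ka^*=a^*P^B_{k-1}$, $P^B_ka=aP^B_{k+1}$ and the diagonal values of $P^B_{k\pm1}$ --- i.e.\ the full ladder structure rather than properties of one projection. I checked the two points you flagged: $[\pi_1,\pi_2]=iB$ and the raising/lowering assignment (so that $a^*a=k-1$ on the $k$-th level) are both right, and the $k=1$ edge case is handled correctly by the vanishing factor. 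Your Step 1 (magnetic translation invariance) is valid but superfluous, since Steps 2--3 already prove the identity at every $z$ directly.
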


We emphasize that the integration in \eqref{eq:homneuproof} is with respect to the variable $z'$. The identity is also true (and easier to prove) when the integrals are performed with respect to $z$ with $z'$ fixed. Our proof below does not use the explicit form \eqref{eq:explicit}, but only that $P^B_k$ is smooth and is constant on the diagonal (which follows by the magnetic translation covariance of the Landau operator).

\begin{proof}
We denote $\mathbf Q_z := \mathbf D_z-B \mathbf A(z)$ and abbreviate $P:=P_k^B$. Since $P^2=P$, the left side of \eqref{eq:homneuproof} equals $\mathbf Q_z \overline{\mathbf{ Q}_{z'}} P(z,z')|_{z=z'}$. Using this and that $P(x,x)=B/2\pi$, the right side equals $B^2(2k-1)/(2\pi)$. By \eqref{eq:eq} one has
$$\mathbf Q_z^2 P(z,z')|_{z=z'} = B(2k-1)\frac B{2\pi}
\qquad \text{and} \qquad 
\overline{\mathbf Q_{z'}}^2 P(z,z')|_{z=z'} = B(2k-1)\frac B{2\pi}\,,
$$
and therefore it suffices to prove that
\begin{equation}\label{eq:kernelder}
\left(\mathbf Q_z^2+ \overline{\mathbf Q_{z'}}^2 - 2 \mathbf Q_z \overline{\mathbf{ Q}_{z'}} \right) P(z,z')|_{z=z'} = 0 \,.
\end{equation}
Now we expand $\mathbf Q_z$ and $\mathbf Q_{z'}$ and write $\mathbf Q_z^2+ \overline{\mathbf Q_{z'}}^2 - 2 \mathbf Q_z \overline{\mathbf{ Q}_{z'}}$ as a sum of three terms, containing only derivatives of order zero, one and two, respectively. The zeroth order term is easily seen to vanish if $z=z'$. The first order term is given by $- 2B \left(\mathbf A(z)-\mathbf A(z')\right)\cdot \left(\mathbf{D}_z + \mathbf{D}_{z'} \right)$ and hence also vanishes if $z=z'$. Thus \eqref{eq:kernelder} is equivalent to
$$
\left(\mathbf D_z^2+ \mathbf D_{z'}^2 + 2 \mathbf D_z \mathbf{ D}_{z'} \right) P(z,z')|_{z=z'}
=0 \,.
$$
The latter equality follows by differentiating the identity $P_k(z,z)=B/2\pi$ twice with respect to $z$. This concludes the proof of \eqref{eq:homneuproof}.
\end{proof}

We now turn to the

\begin{proof}[Proof of Proposition \ref{extrial}]
Recalling \eqref{eq:eq} we will look for $U$ in the form $P_k^B(\cdot,z')$. According to Lemma \ref{mainlemma},
$$
\int_{\R^2} \int_\Omega |(\mathbf D_z-B\mathbf A(z)) P_k^B(z,z')|^2 \,dz \,dz' 
= B(2k-1) \int_{\R^2} \int_\Omega |P_k^B(z,z')|^2 \,dz \,dz' \,.
$$
As observed in the proof of that lemma the right hand side equals $B(2k-1) \frac{B}{2\pi} |\Omega|$ and hence both sides are finite. Hence the set $K$ of all $z'\in\R^2$ such that
\begin{equation}\label{eq:defk}
\int_\Omega |(\mathbf D_z-B\mathbf A(z)) P_k^B(z,z') |^2 \,dz \leq B(2k-1) \int_\Omega |P_k^B(z,z') |^2 \,dz
\end{equation}
has positive measure. To complete the proof we have to show that the set $\{ \chi_\Omega P_k^B(\cdot,z') :\ z'\in K\}$ is infinite dimensional. 

By Fubini's theorem there is an $a\in\R$ such that $\Gamma:=\{x'\in\R : (x',a)\in K\}$ has positive measure. Let $b\in\R$ such that $I:=\{ x\in\R : (x,b)\in\Omega\}$ is non-empty. We claim that the functions $P_k^B((\cdot,b),z')$, $z'\in\Gamma$, are linearly independent on $I$. Indeed, if
$$
\sum_{j=1}^N \alpha_j P_k^B((x,b),w^{(j)}) = 0
\qquad\text{for all}\ x\in I
$$
and some $\alpha_j\in\C$ and $w^{(j)}=(s^{(j)},a)\in\Gamma$, then by \eqref{eq:explicit}
$$
\sum_{j=1}^N \tilde\alpha_j e^{B(x s^{(j)}-ix a)/2} L_{k-1}(B((x-s^{(j)})^2+ (a-b)^2)/2) = 0
\qquad\text{for all}\ x\in I\,,
$$
where $\tilde\alpha_j:= e^{iBbs^{(j)}/2-B(s^{(j)})^2/4}\alpha_j$. Since the left-hand side of this identity is a real-analytic function of $x$, it holds for all $x\in\R$. Letting $x\to\infty$ one easily concludes that $\tilde\alpha_j=0$ for all $j$, and hence also $\alpha_j=0$, as claimed.
\iffalse 
We first claim that the interior of $K$ is non-empty. By the explicit form of $U_{k,z}$ and dominated convergence one easily sees that both sides of inequality \eqref{eq:defk} are continuous with respect to $z$. Hence $K$ is closed and contains the open set $K_<$ of all $z\in\R^2$ for which \eqref{eq:defk} holds with strict inequality. If $K_<$ is non-empty we obtain the claim, and if $K_<$ is empty then necessarily $K=\R^2$ and the claim is true as well.
\fi
\end{proof}

\begin{remark}\label{extrialrem}
 Proposition \ref{extrial} has a three-dimensional analogue. Indeed, the same proof shows that if $B>0$, $k\in\N$ and $\Omega\subset\R^3$ is a domain of finite measure there exist infinitely many linearly independent functions $U \in C^\infty(\Omega)\cap L_2(\Omega)$, depending only on the variables $(x,y)\in\R^2$, such that
\begin{align*}
(\mathbf D_{(x,y)}- B\mathbf A(x,y))^2 U & = B(2k-1) U \qquad \text{in} \ \Omega\,, \\
\int_\Omega | (\mathbf D_{(x,y)}- B\mathbf A(x,y)) U|^2 \,dx\,dy\,dt & \leq B(2k-1) \int_\Omega |U|^2 \,dx\,dy\,dt \,.
\end{align*}
\end{remark}

%%%%%%%%%%%%%%%%%%%%%%%%%%%%%%%%%%%%%%%%%%%%%%%%%%%%%%

\subsection{Proof of Theorem \ref{main}}

Given Remark \ref{extrialrem}, Theorem \ref{main} follows similarly as in \cite{Fi}. We include the proof not only in order to make this presentation self-contained, but also since we have managed to simplify Filonov's proof by avoiding the use of a unique continuation result.

We abbreviate $\lambda_j^D := \lambda_j(L_\Omega^D)$ and similarly for the Neumann eigenvalues. Let $j\in\N$ be fixed and denote by $\phi_1^D,\ldots,\phi_j^D$ orthonormal eigenfunctions corresponding to the eigenvalues $\lambda_1^D,\ldots,\lambda_j^D$. Moreover, we choose $k\in\N$ and $\tau>0$ such that $4\tau(2k-1)= \lambda_j^D$. According to Remark \ref{extrialrem} there exists a smooth function $U$ on $\Omega$ depending only on the variables $(x,y)$ such that
\begin{equation}
\label{eq:u}
\begin{split}
(\mathbf D_{(x,y)}- 4\tau\mathbf A(x,y))^2 U & = 4\tau(2k-1) U \qquad \text{in} \ \Omega\,, \\
\int_\Omega | (\mathbf D_{(x,y)}- 4\tau\mathbf A(x,y)) U|^2 \,dx\,dy\,dt & \leq 4\tau(2k-1) \int_\Omega |U|^2 \,dx\,dy\,dt \,.
\end{split}
\end{equation}
and such that $e^{i\tau t} U$ is linearly independent of $\phi_1^D,\ldots,\phi_j^D$ and of the space $\mathcal N$ spanned by all Neumann eigenfunctions corresponding to eigenvalues less or equal to $\lambda_{j+1}^N$. (We emphasize that if $\lambda_{j+1}^N$ is degenerate, the dimension of $\mathcal N$ might exceed $j+1$, but is finite by the compactness assumption.) With this choice of $U$ the space
$$
\mathcal M :=  \Span \{\phi_1^D,\ldots,\phi_j^D,e^{i\tau t} U\}
$$
is $j+1$-dimensional and hence by the variational principle
\begin{equation}\label{eq:varprinc}
\lambda_{j+1}^N \leq \sup_{0\not\equiv u\in\mathcal M} \frac{ \|Xu\|^2+\|Yu\|^2}{\|u\|^2} \,.
\end{equation}
In order to estimate the Rayleigh quotient we write an arbitrary $u\in\mathcal M$ as
$$
u(x,y,t) := \sum_{i=1}^j \alpha_i \phi_i^D(x,y,t) + \alpha_{j+1} e^{i\tau t} U(x,y)
$$
with constants $\alpha_1,\ldots,\alpha_{j+1}\in\C$. Using the equation of the $\phi_i^D$ and their orthogonality we obtain
\begin{align*}
\|Xu\|^2+\|Yu\|^2
= & \sum_{i=1}^j \lambda_i^D |\alpha_i|^2 
+ |\alpha_{j+1}|^2 \int_\Omega \left( \left|X e^{i\tau t} U\right|^2 + \left|Y e^{i\tau t} U\right|^2 \right) \,dx\,dy\,dt \\
& + 2\re \sum_{i=1}^j \overline{\alpha_{j+1}}\alpha_i \int_\Omega \left( \overline{X e^{i\tau t} U} X\phi_i^D + \overline{Y e^{i\tau t} U} Y\phi_i^D \right) \,dx\,dy\,dt \,.
\end{align*}
Note that $(X e^{i\tau t} U, Y e^{i\tau t} U)^T = i e^{i\tau t} (\mathbf D_{(x,y)}- 4\tau\mathbf A(x,y)) U$. Integrating by parts, using that $\phi_i^D$ satisfies Dirichlet boundary conditions and recalling the equation in \eqref{eq:u} for $U$ yields
\begin{align*}
& \int_\Omega \left( \overline{X e^{i\tau t} U} X\phi_i^D + \overline{Y e^{i\tau t} U} Y\phi_i^D \right) \,dx\,dy\,dt \\
& \quad = \int_\Omega e^{-i\tau t} \overline{ (\mathbf D_{(x,y)}- 4\tau\mathbf A(x,y))^2 U} \ \phi_i^D \,dx\,dy\,dt
= 4\tau(2k-1) \int_\Omega e^{-i\tau t} \overline{U} \phi_i^D \,dx\,dy\,dt \,.
\end{align*}
Moreover, by the estimate in \eqref{eq:u}
\begin{align*}
& \int_\Omega \left( \left|X e^{i\tau t} U\right|^2 + \left|Y e^{i\tau t} U\right|^2 \right) \,dx\,dy\,dt \\
& \quad = \int_\Omega | (\mathbf D_{(x,y)}- 4\tau\mathbf A(x,y)) U|^2 \,dx\,dy\,dt
\leq 4\tau(2k-1) \int_\Omega |U(x,y)|^2 \,dx\,dy\,dt \,.
\end{align*}
Hence, estimating $\lambda_i^D\leq\lambda_j^D$ and recalling that $4\tau(2k-1)=\lambda_j^D$ we obtain
\begin{align*}
\|Xu\|^2+\|Yu\|^2
& \leq \lambda_j^D \left( \sum_{i=1}^j |\alpha_i|^2 + |\alpha_{j+1}|^2 \int_\Omega |U(x,y)|^2 \,dx\,dy\,dt
 \right. \\
& \qquad \qquad \left. + 2 \re \sum_{i=1}^j \overline{\alpha_{j+1}}\alpha_i \int_\Omega e^{-i\tau t} \overline{U} \phi_i^D \,dx\,dy\,dt \right) \\
& = \lambda_j^D \|u\|^2 \,.
\end{align*}
By the variational principle, see \eqref{eq:varprinc}, this implies that $\lambda_{j+1}^N\leq \lambda_j^D$. Moreover, the inequality in \eqref{eq:varprinc} is strict unless $\mathcal M\subset\mathcal N$. But this is impossible since we have chosen $e^{i\tau t}U$ to be linearly independent of $\mathcal N$. This proves Theorem \ref{main}.

%%%%%%%%%%%%%%%%%%%%%%%%%%%%%%%%%%%%%%%%%%%%%%

\section{Two extensions}\label{sec:ext}

\subsection{The Landau operator}

In this subsection we let $d=2$ or $d=3$. If $d=2$ we use coordinates $z=(x,y)$ and define $\mathbf A(x,y):=\frac12 (-y,x)^T$. If $d=3$ we use coordinates $z=(x,y,t)$ and define $\mathbf A(x,y,t):=\frac12 (-y,x,0)^T$. For a domain $\Omega\subset\R^d$ we put $H^1_{B \mathbf A}(\Omega):=\{u\in L_2(\Omega)\cap H^1_\loc(\Omega):\ (\mathbf D- B \mathbf A)u \in L_2(\Omega)\}$ with norm $\left(\|(\mathbf D-B\mathbf A)u\|^2+\|u\|^2\right)^{1/2}$ and denote by \r{H}$^1_{B \mathbf A}(\Omega)$ the closure of $C_0^\infty(\Omega)$ in $H^1_{B\mathbf A}(\Omega)$. The self-adjoint operators $H_\Omega^D(B)$ and $H_\Omega^N(B)$ in $L_2(\Omega)$ are defined via the quadratic forms
$$
\|(\mathbf D-B\mathbf A)u\|^2 = \int_\Omega |(\mathbf D- B\mathbf A)u|^2 \,dz
$$
with form domains \r{H}$^1_{B \mathbf A}(\Omega)$ and $H^1_{B \mathbf A}(\Omega)$, respectively.

\begin{theorem}\label{landau}
Let $B>0$ and let $\Omega\subset\R^d$, $d=2,3$, be a domain of finite measure such that the embedding $H^1_{B\mathbf A}(\Omega)\subset L_2(\Omega)$ is compact. 
\begin{enumerate}
\item
If $d=2$ let $k\in\N$ and assume that $H_\Omega^D(B)$ has $j$ eigenvalues less or equal to $B(2k-1)$. Then $H_\Omega^N(B)$ has $j+1$ eigenvalues less than $B(2k-1)$.
\item
If $d=3$ then $\lambda_{j+1}(H_\Omega^N(B))<\lambda_{j}(H_\Omega^D(B))$ for all $j\in\N$.
\end{enumerate}
\end{theorem}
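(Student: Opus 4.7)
The plan is to adapt the proof of Theorem~\ref{main} essentially verbatim. For $d=2$, Proposition~\ref{extrial} directly supplies the required trial function; for $d=3$, I will obtain one by multiplying the trial function from Remark~\ref{extrialrem} by a plane wave $e^{i\xi t}$ in the translation-invariant direction, with $\xi$ chosen so that the resulting eigenvalue equals $\lambda_j^D$.

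For part (i), set $\lambda:=B(2k-1)$ and let $\phi_1^D,\ldots,\phi_j^D$ denote orthonormal Dirichlet eigenfunctions with eigenvalues $\lambda_i^D\leq\lambda$ (which exist by hypothesis). Let $\mathcal N$ be the span of all Neumann eigenfunctions with eigenvalue $\leq\lambda_{j+1}(H_\Omega^N(B))$, which is finite-dimensional by the compactness assumption. Proposition~\ref{extrial} provides infinitely many linearly independent $U\in C^\infty(\Omega)\cap L_2(\Omega)$ with $(\mathbf D-B\mathbf A)^2U=\lambda U$ in $\Omega$ and $\int_\Omega|(\mathbf D-B\mathbf A)U|^2\,dz\leq\lambda\int_\Omega|U|^2\,dz$; I would pick one that is linearly independent of $\mathcal N\cup\{\phi_1^D,\ldots,\phi_j^D\}$, and form the $(j+1)$-dimensional trial space $\mathcal M:=\Span\{\phi_1^D,\ldots,\phi_j^D,U\}$. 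The Rayleigh-quotient computation of Section~\ref{sec:proof} carries over verbatim (the $e^{i\tau t}$ factor there was irrelevant beyond supplying the correct spectral parameter)---using orthonormality of the $\phi_i^D$, integration by parts justified by Dirichlet boundary conditions, the eigenvalue equation for $U$, the energy inequality, and $\lambda_i^D\leq\lambda$---and yields $\|(\mathbf D-B\mathbf A)u\|^2\leq\lambda\|u\|^2$ for every $u\in\mathcal M$. The variational argument at the end of Section~\ref{sec:proof} (the min-max inequality is strict unless $\mathcal M\subset\mathcal N$, which is excluded by the choice of $U$) then gives $\lambda_{j+1}(H_\Omega^N(B))<\lambda$, so $H_\Omega^N(B)$ has at least $j+1$ eigenvalues strictly below $B(2k-1)$.

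For part (ii), given $j\in\N$, put $\lambda:=\lambda_j(H_\Omega^D(B))$. Since the free three-dimensional Landau operator $(\mathbf D-B\mathbf A)^2$ on $L_2(\R^3)$ has spectrum $[B,\infty)$, extending Dirichlet test functions by zero gives $H_\Omega^D(B)\geq B\,I$, hence $\lambda\geq B$. Write $\lambda=B+\xi^2$ with $\xi\geq 0$ and apply Remark~\ref{extrialrem} with $k=1$ to obtain infinitely many linearly independent $U(x,y)$ satisfying $(\mathbf D_{(x,y)}-B\mathbf A)^2U=BU$ in $\Omega$ and $\int_\Omega|(\mathbf D_{(x,y)}-B\mathbf A)U|^2\,dx\,dy\,dt\leq B\int_\Omega|U|^2\,dx\,dy\,dt$. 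Setting $V(x,y,t):=U(x,y)\,e^{i\xi t}$, a direct computation gives $(\mathbf D-B\mathbf A)^2V=\lambda V$ in $\Omega$ and $\int_\Omega|(\mathbf D-B\mathbf A)V|^2\,dz\leq\lambda\int_\Omega|V|^2\,dz$, since the $\mathbf D_t^2$ contribution is exactly $\xi^2$. The proof of Theorem~\ref{main} then goes through with $V$ in place of $e^{i\tau t}U$ and $\lambda$ in place of $4\tau(2k-1)$, producing $\lambda_{j+1}(H_\Omega^N(B))<\lambda_j(H_\Omega^D(B))$.

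I do not anticipate a genuine obstacle: the real novelty, namely the construction of trial functions, is already packaged in Proposition~\ref{extrial} and Remark~\ref{extrialrem}, and the Rayleigh-quotient and spectral-projection manipulations are structurally identical to those in Section~\ref{sec:proof}. The only step peculiar to part~(ii) is the verification that $\lambda_j^D\geq B$, which guarantees the existence of a real $\xi$ with $B+\xi^2=\lambda_j^D$; this replaces the freedom provided by the dual Fourier parameter $\tau$ in the Heisenberg setting, where $4\tau(2k-1)$ can be made to match any positive value of $\lambda_j^D$ without such a constraint.
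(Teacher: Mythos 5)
Your proposal is correct and follows essentially the same route as the paper: for $d=2$ it augments the Dirichlet eigenfunctions by a trial function from Proposition~\ref{extrial} and repeats the Rayleigh-quotient computation, and for $d=3$ it multiplies the function from Remark~\ref{extrialrem} by $e^{i\xi t}$ after noting $\lambda_j^D\geq B$ (the paper phrases this with a general $k\in\N$ and $B(2k-1)+\tau^2=\lambda_j^D$, but your choice $k=1$ is an instance of the same idea). No gaps.
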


Note that by the diamagnetic inequality $|(\mathbf D-B\mathbf A)u| \geq |\nabla |u||$ the compactness of $H^1(\Omega)\subset L_2(\Omega)$ is sufficient for the compactness of $H^1_{B\mathbf A}(\Omega)\subset L_2(\Omega)$.

\begin{proof}
First assume that $d=2$. Let $\phi_1^D,\ldots,\phi_j^D$ be the Dirichlet eigenfunctions corresponding to the eigenvalues less or equal $B(2k-1)$ and let $\mathcal N$ be the subspace generated by the Neumann eigenfunctions corresponding to the eigenvalues less or equal $B(2k-1)$. Let $U$ be a function as in Proposition \ref{extrial} which is linearly independent of $\phi_1^D,\ldots,\phi_j^D$ and $\mathcal N$. Then any function $u$ in the span of $\phi_1^D,\ldots,\phi_j^D$ and $U$ satisfies by a similar calculation as in the proof of Theorem \ref{main}
$$
\int_\Omega |(\mathbf D- B\mathbf A)u|^2 \,dx\,dy \leq B(2k-1) \int_\Omega |u|^2 \,dx\,dy \,.
$$
Hence the $(j+1)$'th Neumann eigenvalue is less or equal to $B(2k-1)$, and equality is excluded as before by linear independence of $U$ and $\mathcal N$.

Now let $d=3$. Let $\phi_1^D,\ldots,\phi_j^D$ be Dirichlet eigenfunctions corresponding to the eigenvalues $\lambda_i^D:=\lambda_{i}(H_\Omega^D(B))$, $i=1,\ldots,j$, and let $\mathcal N$ be the subspace generated by the Neumann eigenfunctions corresponding to the eigenvalues less or equal $\lambda_j^N:=\lambda_{i}(H_\Omega^N(B))$. Since $\lambda_j^D\geq \lambda_1^D\geq B$ we can choose $k\in\N$ and $\tau\in\R$ such that $B(2k-1)+\tau^2=\lambda_j^D$. Let $U$ be a function as in Remark \ref{extrialrem} such that $e^{i\tau t} U$ is linearly independent of $\phi_1^D,\ldots,\phi_j^D$ and $\mathcal N$. Using that
\begin{align*}
\int_\Omega |(\mathbf D_z- B\mathbf A(z))e^{i\tau t} U|^2 \,dz
& = \int_\Omega \left(|(\mathbf D_{(x,y)}- B\mathbf A(x,y)) U|^2 + \tau^2 |U|^2\right) \,dz \\
& \leq \left(B(2k-1)+\tau^2\right) \int_\Omega |e^{i\tau t} U|^2 \,dz
\end{align*}
one finds that any function $u$ in the span of $\phi_1^D,\ldots,\phi_j^D$ and $e^{i\tau t}U$ satisfies
$$
\int_\Omega |(\mathbf D- B\mathbf A)u|^2 \,dz \leq \lambda_j^D \int_\Omega |u|^2 \,dz
$$
and one derives the asserted inequality as before.
\end{proof}

\begin{remark}
 Similarly as in the non-magnetic case Theorem \ref{landau} remains valid if instead of Neumann boundary conditions one considers Robin boundary conditions with a density having non-positive average. To be more precise, assume that the trace embedding $H^1_{B\mathbf A}(\Omega)\subset L_2(\partial\Omega)$ is compact and let $\sigma\in L_\infty(\partial\Omega)$ be a real-valued function with $\int_{\partial\Omega} \sigma \,d\omega(z)\leq 0$. (Here $d\omega$ denotes the surface measure on $\partial\Omega$.) The self-adjoint operator $H_\Omega^{(\sigma)}(B)$ is defined via the quadratic form
$$
\int_\Omega |(\mathbf D- B\mathbf A)u|^2 \,dz + \int_{\partial\Omega} \sigma |u|^2 \,d\omega(z)
$$
with form domain $H^1_{B \mathbf A}(\Omega)$. Then Theorem \ref{landau} remains valid with $H_\Omega^N(B)$ replaced by $H_\Omega^{(\sigma)}(B)$. The proof is based on an analogue of Proposition \ref{extrial} with the energy bound
\begin{align*}
\int_\Omega | (\mathbf D- B\mathbf A) U|^2 \,dz + \int_{\partial\Omega} \sigma |U|^2 \,d\omega(z)
 \leq B(2k-1) \int_\Omega |U|^2 \,dz \,.
\end{align*}
The existence of such $U$'s is proved as before by averaging using the fact that
$$
\int_{\R^2} \int_{\partial\Omega} \sigma(z) |P^B_k(z,z')|^2 \,d\omega(z) \,dz' = \frac{B}{2\pi} \int_{\partial\Omega} \sigma(z) \,d\omega(z)\leq 0 \,.
$$
\end{remark}

\begin{remark}\label{landaumulti}
 Theorem \ref{landau} has generalizations to dimensions $d\geq 4$. In $\R^d$ we use coordinates $(x_1,y_1,\ldots,x_n,y_n,t_1,\ldots,t_m)$ where $2n+m=d$. If $d$ is even we allow $m=0$. For $\mathbf B=(B_1,\ldots,B_n)$ with all $B_j>0$ let
\begin{equation}
 \label{eq:amulti}
\mathbf A_\mathbf B(x_1,y_1,\ldots,x_n,y_n):=\tfrac 12 (-B_1y_1,B_1x_1,\ldots,-B_ny_n,B_nx_n,0,\ldots,0)^T \,.
\end{equation}
If $\Omega\subset\R^d$ is a domain we define the operators $H_\Omega(\mathbf B)$ and $H_\Omega^N(\mathbf B)$ similarly as before. We claim that if $m=0$ then the analogue of the first part of Theorem \ref{landau} is valid (with $B(2k-1)$ replaced by $\sum_{j=1}^n B_j(2k_j-1)$ for $k_j\in\N$), whereas if $m\geq 1$ then the analogue of the second part of Theorem \ref{landau} is valid. This follows by a similar argument as before, but now we consider functions $P_{k_1}^{B_1}(z_1,z_1')\cdots P_{k_n}^{B_n}(z_n,z_n')$ with $z_j=(x_j,y_j)$ and we average over the parameters $z_j'$, $j=1,\ldots,n$.
\end{remark}

%%%%%%%%%%%%%%%%%%%%%%%%%%%%%%%%%%%%%%%%%%%%%%%%%%

\subsection{Higher dimensional Heisenberg groups}

Finally, we prove a generalization of Theorem \ref{main} to the higher dimensional Heisenberg groups $\H^n$, that is, $\R^{2n+1}$ with coordinates $(x_1,y_1,\ldots,x_n,y_n,t)$ and multiplication
\begin{align*}
& (x_1,y_1,\ldots,x_n,y_n,t)\circ (x_1',y_1',\ldots,x_n',y_n',t')\\
&\qquad = \left(x_1+x_1',y_1+y_1',\ldots,x_n+x_n',,t+t'-2\sum_j(x_jy_j'-y_jx_j')\right) \,.
\end{align*}
The vector fields
$$
X_j= \frac{\partial}{\partial x_j} + 2y_j\frac{\partial}{\partial t} \,,
\qquad
Y_j= \frac{\partial}{\partial y_j} - 2x_j\frac{\partial}{\partial t}
$$
are left-invariant and the sub-Laplacian on $\H^n$ is given by $-\sum_{j=1}^n \left(X_j^2+Y_j^2\right)$. If $\Omega\subset\H^n$ is a domain then the Sobolev spaces $S^1(\Omega)$ and \textit{\r{S}}$^1(\Omega)$ are defined similarly as for $n=1$ and the Dirichlet and Neumann sub-Laplacians $L_\Omega^D$ and $L_\Omega^N$ are defined through the quadratic form
$$
\sum_{j=1}^n \left( \| X_j u \|_{L_2(\Omega)}^2 + \|Y_j u \|_{L_2(\Omega)}^2 \right)
$$
with form domains \textit{\r{S}}$^1(\Omega)$ and $S^1(\Omega)$, respectively. We shall prove that the eigenvalues of these operators satisfy the same bound as in the case $n=1$.

\begin{theorem}
 Let $\Omega\subset\H^n$ be a domain of finite measure such that the embedding $S^1(\Omega)\subset L_2(\Omega)$ is compact. Then $\lambda_{j+1}(L_\Omega^N)<\lambda_j(L_\Omega^D)$ for any $j\in\N$.
\end{theorem}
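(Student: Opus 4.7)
The strategy is to follow the proof of Theorem \ref{main} almost verbatim, using the multi-parameter trial-function construction of Remark \ref{landaumulti} in place of Remark \ref{extrialrem}. The key computation is that for any $\tau>0$ and any smooth $U=U(x_1,y_1,\ldots,x_n,y_n)$ one has $X_{j'}(e^{i\tau t}U)=ie^{i\tau t}(\partial_{x_{j'}}+2i\tau y_{j'})U$ and $Y_{j'}(e^{i\tau t}U)=ie^{i\tau t}(\partial_{y_{j'}}-2i\tau x_{j'})U$, whence
\[
\sum_{j'=1}^n\bigl(|X_{j'}e^{i\tau t}U|^2+|Y_{j'}e^{i\tau t}U|^2\bigr)=\sum_{j'=1}^n|(\mathbf D_{(x_{j'},y_{j'})}-4\tau\mathbf A(x_{j'},y_{j'}))U|^2,
\]
and $-\sum_{j'}(X_{j'}^2+Y_{j'}^2)(e^{i\tau t}U)=e^{i\tau t}\sum_{j'}(\mathbf D_{(x_{j'},y_{j'})}-4\tau\mathbf A(x_{j'},y_{j'}))^2 U$. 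Appropriate trial functions are therefore supplied by the multi-dimensional Landau construction of Remark \ref{landaumulti} with all magnetic-field strengths equal to $4\tau$, in the version (analogous to Remark \ref{extrialrem}) where $U$ is taken to depend only on $(x_1,y_1,\ldots,x_n,y_n)$.

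Fix $j\in\N$, set $\lambda_j^D:=\lambda_j(L_\Omega^D)$, and let $\phi_1^D,\ldots,\phi_j^D$ be orthonormal Dirichlet eigenfunctions. Choose $k_1=\cdots=k_n=1$ and $\tau:=\lambda_j^D/(4n)>0$, so that $4\tau\sum_{j'=1}^n(2k_{j'}-1)=\lambda_j^D$. Averaging the product kernel $P_{k_1}^{4\tau}(z_1,z_1')\cdots P_{k_n}^{4\tau}(z_n,z_n')$ (with $z_{j'}=(x_{j'},y_{j'})$) over $(z_1',\ldots,z_n')\in\R^{2n}$ while ignoring the $t$-variable produces an infinite-dimensional family of $U\in C^\infty(\Omega)\cap L_2(\Omega)$ depending only on $(x_1,y_1,\ldots,x_n,y_n)$ with
\[
\sum_{j'=1}^n(\mathbf D_{(x_{j'},y_{j'})}-4\tau\mathbf A(x_{j'},y_{j'}))^2 U=\lambda_j^D U\quad\text{in }\Omega
\]
and the averaged bound $\sum_{j'}\int_\Omega|(\mathbf D_{(x_{j'},y_{j'})}-4\tau\mathbf A(x_{j'},y_{j'}))U|^2\leq\lambda_j^D\int_\Omega|U|^2$. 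Since the subspace $\mathcal N$ spanned by Neumann eigenfunctions with eigenvalue $\leq\lambda_{j+1}^N$ is finite-dimensional by the compactness hypothesis, we may pick $U$ so that $e^{i\tau t}U$ is linearly independent of $\phi_1^D,\ldots,\phi_j^D$ and of $\mathcal N$.

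Setting $\mathcal M:=\Span\{\phi_1^D,\ldots,\phi_j^D,e^{i\tau t}U\}$ (dimension $j+1$) and expanding $u=\sum_{i=1}^j\alpha_i\phi_i^D+\alpha_{j+1}e^{i\tau t}U\in\mathcal M$, the quadratic form $\sum_{j'}(\|X_{j'}u\|^2+\|Y_{j'}u\|^2)$ splits into three pieces exactly as in the proof of Theorem \ref{main}: the $\phi$-diagonal gives $\sum_i\lambda_i^D|\alpha_i|^2\leq\lambda_j^D\sum_i|\alpha_i|^2$; the $U$-diagonal equals $|\alpha_{j+1}|^2\sum_{j'}\int_\Omega|(\mathbf D_{(x_{j'},y_{j'})}-4\tau\mathbf A(x_{j'},y_{j'}))U|^2\leq\lambda_j^D|\alpha_{j+1}|^2\|U\|^2$; and the cross terms are handled by integrating by parts (using the Dirichlet condition on the $\phi_i^D$) to transfer the full sub-Laplacian onto $e^{i\tau t}U$, then invoking its eigenfunction equation to get $\lambda_j^D\int_\Omega e^{-i\tau t}\overline U\phi_i^D$. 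Summing, the total equals $\lambda_j^D\|u\|^2$, so the variational principle yields $\lambda_{j+1}^N\leq\lambda_j^D$, with strict inequality because $\mathcal M\not\subset\mathcal N$. The only point beyond pure bookkeeping is verifying that the averaging and linear-independence arguments of Section \ref{sec:extrial} still produce an infinite-dimensional family of $t$-independent trial functions on a domain $\Omega\subset\R^{2n+1}$; this is done by the same Fubini-plus-real-analyticity argument as in the proof of Proposition \ref{extrial}, applied on a non-empty slice $\{t=b\}\cap\Omega$.
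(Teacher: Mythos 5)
Your proposal is correct and follows essentially the same route as the paper: it invokes the multi-parameter averaged trial functions of Remark \ref{landaumulti} (in the $t$-independent form of Remark \ref{extrialrem}), chooses $\tau$ and the $k_{j'}$ so that $4\tau\sum(2k_{j'}-1)=\lambda_j^D$, and runs the variational argument of Theorem \ref{main} with $e^{i\tau t}U$ as the extra trial function. The only blemish is a harmless phase factor in your formula for $X_{j'}(e^{i\tau t}U)$ (it should be $e^{i\tau t}(\partial_{x_{j'}}+2i\tau y_{j'})U = ie^{i\tau t}(\mathbf D_{x_{j'}}-4\tau\mathbf A(x_{j'},y_{j'})_1)U$), which does not affect any of the squared norms or the conclusion.
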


\begin{proof}
Let $\tau>0$ and define $\mathbf A_\mathbf \tau$ by \eqref{eq:amulti} with $B_1=\ldots=B_n=\tau$. In Remark~\ref{landaumulti} we have outlined how to construct for given $k_1,\ldots,k_n\in\N$ infinitely many linearly independent functions $U$ of the variables $(x_1,y_1,\ldots,x_n,y_n)$ such that
\begin{align*}
(\mathbf D_{(x,y)} - \mathbf A_{4\mathbf \tau} )^2 U & = 4\tau\left(2\sum k_j - n\right) \ U \,, \\
\| (\mathbf D_{(x,y)} - \mathbf A_{4\mathbf \tau} ) U \|_{L_2(\Omega)}^2 & \leq 4\tau\left(2\sum k_j - n\right) \ \|U\|^2_{L_2(\Omega)} \,.
\end{align*}
The assertion now follows as in the the proof of Theorem \ref{main} by taking $e^{i\tau t} U$ as an additional trial function in the variational principle.
\end{proof}

%%%%%%%%%%%%%%%%%%%%%%%%%%%%%%%%%%%%%%%%%%%%%%%%%%%%%%%%%%%%%%%%%%%%%%%%%%%%%%%%%%%%%%%%%%%

\bibliographystyle{amsalpha}

\end{document}